\newtheorem{thm}{Theorem}
\newtheorem{lem}[thm]{Lemma}
\newtheorem{cor}[thm]{Corollary}
\newtheorem{df}{Definition}
\newtheorem{obsn}{Observation}
\newtheorem{ex}{Example}
\newcommand{\bdf}{\begin{df} \begin{rm}}
\newcommand{\edf}{\end{rm} \end{df}}
\newenvironment{proof}{{\bf Proof.}}{\hspace*{\fill} \rule{2mm}{2mm} \par \hspace{0.1mm}}
\title{Super dominating sets in graphs}
\author{ M. Lema\'nska $^{1}$, V. Swaminathan $^{2}$
 Y.B. Venkatakrishnan$^{3}$,  R. Zuazua $^{4}$
\\
\\
$^1${\small  Gdansk University of Technology, Poland,  magda\@@mif.pg.gda.pl}
\\
$^2${\small  Saraswathi
Narayanan College, Madurai, India, sulanesri@yahoo.com}
\\
$^3${\small   SASTRA University, Tanjore, India,venkatakrish2\@@maths.sastra.edu }
\\
$^4${\small  Universidad Nacional Aut\'onoma de M\'exico, Mexico,  ritazuazua\@@gmail.com}
\\
}
\date{}
\begin{document}

\maketitle

\begin{abstract}
Let $G=(V,E)$ be a graph. A subset $D$ of $V(G)$ is called a super dominating set if for every $v \in V(G)-D$ there exists  an external private neighbour of $v$ with respect to $V(G)-D.$ The minimum cardinality of a super dominating set is called the super domination number of $G$ and is denoted by $\gamma_{sp}(G)$.  In this paper some results on the super domination number are obtained. We prove that if $T$ is a tree with at least three vertices, then $\frac{n}{2}\leq\gamma_{sp}(T)\leq n-s,$ where $s$ is the number of support vertices in $T$ and we  characterize the extremal trees.

{\bf Keywords:} super dominating set,  dominating set, tree.

{\bf Subject Classification:} 05C69
\end{abstract}

\section{Introduction}  In this work we consider finite, undirected, simple graphs $G=(V,E)$ with $n$ vertices and $q$ edges.
The \emph{neighbourhood} of a vertex $v\in V(G)$ is the set $N_{G}(v)$ of all the vertices adjacent to $v$ in $G$.
For a set $X\subseteq V(G),$ \emph{the open neighbourhood} $N_{G}(X)$ is defined to be $\bigcup_{v\in X}N_{G}(v)$ and
\emph{the closed neighbourhood} $N_{G}[X]=N_{G}(X)\cup X.$ 

The \emph{degree} $d_{G}(v)$ of a vertex $v\in V(G)$ is the number of
edges incident to $v;$ $d_{G}(v)=|N_{G}(v)|.$  If $d_G(v)=n-1,$ then $v$ is a \emph{universal vertex} of $G$ and we call $v$ a \emph{semi-universal vertex} if every vertex of $G$ is either a neighbour of $v$ or has a common neighbour with $v.$ 

If $d_G(v)=1,$ then $v$ is an \emph{end-vertex} of $G.$ We denote by $\Omega(G)$ the set of end vertices of $G$. The neighbour of an end vertex is called a \emph{support vertex.} Support vertex which has more than one end-vertex as a neighbour is a \emph{strong support} vertex. For two sets $X, Y\subset V(G),$ we denote by $E(X,Y)$  the set of edges $uv\in E(G)$ such that $u\in X$ and $ v\in Y.$

For $X \subseteq  V (G)$  and $ x \in X,$  the set
$PN(x,X) =  N_G[x]- N_G[X -\{x\}]$
is called the \emph{private neighborhood} of $x$ with respect to $X$. 
An element $u$ of $PN(x, X)$ is called a \emph{private neighbour of x relative to X} or a $X$-private neighbour of $x.$ An  $X$-private neighbour of $x$ is either  $x$ itself, in wich case $x$ is an isolate vertex of $G[X]$, or is a neighbour of $x$ in $G$ which is not adjacent to any vertex of $X$. This latter type will be called an  $X$-\emph{external private neighbour}  of $x$.

A subset $D \subseteq V(G)$  is a \emph{dominating set} if for every vertex $u \in V(G)-D$, there exists $v \in D$ such that $u$ and $v$ are adjacent.  The minimum cardinality of a dominating set in $G$ is  the \emph{domination number}  of $G$ and is denoted by $\gamma(G)$.

It is well-known (Chapter 1,\cite{b4}) that a dominating set $D$ is minimal if and only if for any vertex $v \in  D$ there exists a private neighbourd of $v$ with respect $D.$  This fact was our motivation for the next definition.

\begin{df}\label{SDS}
A subset $D$ of $V(G)$ is called a super dominating set if for every vertex $v \in V(G)-D$ there exists an external private neighbour of $v$ with respect to $V(G)-D.$
\end{df}

The minimum cardinality of a super dominating set in $G$ is called
the \emph{super domination number} of $G$ and is denoted by
$\gamma_{sp}(G).$ A $\gamma_{sp}(G)-$set is a super dominating set
of $G$ with cardinality $\gamma_{sp}(G) .$ 
\begin{ex}
We can consider a super dominating set $D$ as a set of workers and $V(G)-D$ as a set of managers. Two workers (resp. manager) can be related or not. A worker has category A if it is adjacent to one and only one manager. By definition of superdominating set, each manager in $V(G)-D$ is adjacent at least one worker with category A.   \\

In the same way, we can consider a super dominating set $D$ as a set of students and $V(G)-D$ as a set of professors. Two students (resp. professors) can be related or not. A student is a Ph.D students if it has relation with one and only one professor. By the definition of superdomination set, each professor in  $V(G)-D$ has at least one Ph. D students.   \\ 

\end{ex}

The undefined terms in this paper may be found in \cite{b1}, \cite{b4}.

\begin{ex}
For $n\geq 2,$ the super domination number of the complete graph $K_n$ is $\gamma_{sp}(K_{n})=n-1$ and also for the star $K_{1,n-1}$ is $\gamma_{sp}(K_{1,n-1})=n-1$.\\

For a complete bipartite graph $K_{m,n}$ with $min\{ m,n\} \geq 2,$ the super domination number is $\gamma_{sp}(K_{m,n})=m+n-2 $.

\end{ex}

\begin{obsn} A set $D \subseteq  V(G)$ is a super dominating set of $G$ if and only if   for every $v \in V(G)-D$ there exists $u \in N_G(v) \cap D$ such that $N_G(u) \subseteq D \cup \{v\}$. 
\end{obsn}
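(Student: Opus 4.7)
The plan is to show that both sides of the biconditional are just different ways of writing the condition ``$v$ has an external private neighbour with respect to $V(G)-D$''. So the proof will be essentially an unpacking of definitions, with no combinatorial content; the only care needed is in tracking which role each vertex plays.

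First I would fix $v \in V(G)-D$ and set $X = V(G)-D$, so that $v \in X$. By definition, $PN(v,X) = N_G[v] \setminus N_G[X - \{v\}]$, and an \emph{external} private neighbour of $v$ relative to $X$ is an element of $PN(v,X)$ distinct from $v$ itself (the paper explicitly says the external type consists of neighbours of $v$ that are not adjacent to any other vertex of $X$). Thus $u$ is an external private neighbour of $v$ with respect to $X$ if and only if (i) $u \in N_G(v)$, (ii) $u \notin X$, i.e.\ $u \in D$, and (iii) $u$ is not adjacent to any vertex of $X \setminus \{v\}$.

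Next I would rewrite condition (iii). Since $X \setminus \{v\} = (V(G) \setminus D) \setminus \{v\}$, condition (iii) is equivalent to
\[
N_G(u) \cap \bigl((V(G)\setminus D) \setminus \{v\}\bigr) = \emptyset,
\]
which in turn is equivalent to $N_G(u) \subseteq D \cup \{v\}$. Combined with (i) and (ii), this says exactly that $u \in N_G(v) \cap D$ and $N_G(u) \subseteq D \cup \{v\}$, which is the condition appearing in the observation.

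Finally, I would quantify over $v \in V(G)-D$: the existence, for every such $v$, of an external private neighbour with respect to $V(G)-D$ (the defining condition of a super dominating set) is therefore equivalent to the existence, for every such $v$, of some $u \in N_G(v) \cap D$ with $N_G(u) \subseteq D \cup \{v\}$. There is no substantive obstacle; the only thing to be careful about is not to forget that ``external'' excludes the case $u = v$, which is automatic here because $v \notin D$ while $u \in D$.
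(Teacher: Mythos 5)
Your proof is correct: it is exactly the definitional unpacking that the paper relies on when it states this as an Observation without proof, correctly translating ``external private neighbour of $v$ with respect to $V(G)-D$'' into the three conditions $u\in N_G(v)$, $u\in D$, and $N_G(u)\subseteq D\cup\{v\}$. No issues.
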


\begin{df}\label{SDPS}
A super dominating set $D$ is a perfect set if $E(D, V(G)-D)$ is a perfect matching. 
\end{df}

\section{Preliminary results}

In this section we give some  observations about the bounds on the super domination number.

\begin{obsn}\label{LowerB}
For any graph $G$, $\gamma_{sp}(G)\geq \frac{n}{2}$ since
for  any $\gamma_{sp}(G)-$set $D,$ we have that $|D|\geq |V(G)-D|$.
\end{obsn}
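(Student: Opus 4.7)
The plan is to exhibit an injection from $V(G)-D$ into $D$ for any super dominating set $D$; this immediately gives $|V(G)-D|\leq|D|$, hence $|D|\geq n/2$, and taking $D$ to be a $\gamma_{sp}(G)$-set yields the claim.

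First I would unpack the definition of an external private neighbour in the precise form we need. If $D$ is a super dominating set and $v\in V(G)-D$, then by Definition \ref{SDS} there exists a vertex $u_v\in D\cap N_G(v)$ such that $u_v$ has no neighbour in $V(G)-D$ other than $v$. (Equivalently, by the Observation following Definition \ref{SDS}, $N_G(u_v)\subseteq D\cup\{v\}$.) Using the axiom of choice on the finite set $V(G)-D$, I fix one such $u_v$ for every $v\in V(G)-D$ and define $f\colon V(G)-D\to D$ by $f(v)=u_v$.

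Next I would verify that $f$ is injective. Suppose $v_1,v_2\in V(G)-D$ with $f(v_1)=f(v_2)=u$. Then $u\in N_G(v_1)\cap N_G(v_2)$, so $u$ has (at least) the two neighbours $v_1$ and $v_2$ inside $V(G)-D$. But the defining property of $u=u_{v_1}$ forces the only neighbour of $u$ in $V(G)-D$ to be $v_1$; hence $v_2=v_1$.

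From the injectivity of $f$ I conclude $|V(G)-D|\leq|D|$, i.e.\ $n-|D|\leq|D|$, so $|D|\geq n/2$. Applying this to any $\gamma_{sp}(G)$-set gives $\gamma_{sp}(G)\geq n/2$. There is no real obstacle here; the only point requiring care is to read ``external private neighbour'' correctly — namely, that $u_v$ cannot have any other neighbour in $V(G)-D$ — since this is exactly what makes $f$ well-defined and injective.
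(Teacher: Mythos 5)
Your proposal is correct and matches the reasoning the paper leaves implicit in the observation itself: each $v\in V(G)-D$ is assigned an external private neighbour $u_v\in D$, and the privacy condition $N_G(u_v)\cap(V(G)-D)=\{v\}$ makes this assignment injective, so $|V(G)-D|\leq|D|$ and hence $\gamma_{sp}(G)\geq n/2$. The paper states the inequality $|D|\geq|V(G)-D|$ without spelling out the injection, so your write-up is simply a more explicit version of the same argument.
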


The next observation follows immediately from the Definition $\ref{SDS}$.

\begin{obsn}\label{TrivialB}
For any graph $G$, the super domination number $\gamma_{sp}(G)=1$ if and only if $G\cong K_1$ or $G\cong K_2$ and $\gamma_{sp}(G)=n$ if and only if $G=\overline{K_n}.$
\end{obsn}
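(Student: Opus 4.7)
The statement is a conjunction of two ``iff'' assertions, and I would handle them separately, checking both directions of each by reducing everything to the reformulation in the preceding observation: $D$ is super dominating exactly when every $v\in V(G)-D$ has a neighbour $u\in D$ with $N_G(u)\subseteq D\cup\{v\}$. With this reformulation in hand, both parts become direct unpackings of the definition.

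For the first equivalence, I would first verify the easy direction by exhibiting singleton super dominating sets: for $K_1$ the only vertex works vacuously, and for $K_2=uv$ the set $D=\{u\}$ super dominates $\{v\}$ via its unique neighbour $u$, which satisfies $N_G(u)=\{v\}\subseteq D\cup\{v\}$. For the converse, assume $D=\{u\}$ is a super dominating set. Then for \emph{every} $v\in V(G)-D$ the only candidate witness is $u$ itself, so $u$ must be adjacent to $v$ and $N_G(u)\subseteq \{u,v\}$, i.e. $N_G(u)\subseteq\{v\}$. Since this must hold simultaneously for every $v\in V(G)-D$, the set $V(G)-D$ contains at most one vertex. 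Hence $n\le 2$, and in the case $n=2$ the relation $N_G(u)=\{v\}$ forces the edge $uv$, giving $G\cong K_2$.

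For the second equivalence, the easy direction is immediate: in $\overline{K_n}$ no vertex has any neighbour, so no proper subset $D\subsetneq V(G)$ can super dominate any $v\in V(G)-D$ (such a $v$ has no external private neighbour at all), whence the only super dominating set is $V(G)$ itself. For the converse I would argue contrapositively: if $G$ has at least one edge $uv$, then $D:=V(G)-\{v\}$ is super dominating, since the sole vertex $v\in V(G)-D$ has $u\in N_G(v)\cap D$, and the condition $N_G(u)\subseteq D\cup\{v\}=V(G)$ is satisfied trivially. Thus $\gamma_{sp}(G)\le n-1<n$, contradicting $\gamma_{sp}(G)=n$, so $G$ must be edgeless.

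There is no real obstacle here; the only point demanding care is making sure the ``external'' qualifier on the private neighbour is handled correctly when $|V(G)-D|=1$, since then the set $(V(G)-D)-\{v\}$ is empty and the non-adjacency condition is vacuous. Once this is noted, both directions of both equivalences are one-line verifications from the characterization in the earlier observation.
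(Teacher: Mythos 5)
Your proposal is correct, and it is exactly the verification the paper has in mind: the paper offers no written proof at all, stating only that the observation ``follows immediately from the Definition,'' and your argument is the straightforward unpacking of that definition (via the equivalent reformulation in the paper's first observation) that the authors leave to the reader. Both directions of both equivalences check out, including the careful handling of the vacuous non-adjacency condition when $|V(G)-D|=1$.
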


From the above  observations we have that for  any connected graph $G$ is $$\frac{n}{2}\leq \gamma_{sp}(G) \leq n-1.$$

The following theorem gives us a characterization of the connected graphs with $\gamma_{sp}(G)=\frac{n}{2}.$

\begin{thm}\label{twr1} For any connected graph $G$,  $\gamma_{sp}(G)=\frac{n}{2}$ if and only if every minimum super dominating set is a perfect set.

\end{thm}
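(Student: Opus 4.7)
The plan is to exploit the injection that witnesses the lower bound $\gamma_{sp}(G) \geq n/2$ in Observation~\ref{LowerB}. By the observation just after Definition~\ref{SDS}, for any super dominating set $D$ and each $v \in V(G)-D$ one can choose a vertex $\phi(v) \in N_G(v) \cap D$ with $N_G(\phi(v)) \subseteq D \cup \{v\}$. The map $\phi : V(G)-D \to D$ is injective, for if $\phi(v_1) = \phi(v_2) = u$ with $v_1 \neq v_2$, then $u$ would be adjacent to both $v_1$ and $v_2$, contradicting $N_G(u) \subseteq D \cup \{v_i\}$ for either index. This is precisely why $|D| \geq n/2$, and it is the hook for the whole argument.

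For the backward implication, assume every minimum super dominating set of $G$ is perfect. Fix such a $D$. Then $E(D, V(G)-D)$ is a perfect matching, so in particular $|D| = |V(G)-D|$, giving $|D| = n/2$. Hence $\gamma_{sp}(G) = n/2$. This direction is essentially bookkeeping.

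For the forward implication, assume $\gamma_{sp}(G) = n/2$ and let $D$ be an arbitrary $\gamma_{sp}(G)$-set. Since $|D| = |V(G)-D| = n/2$, the injection $\phi$ is a bijection. I will show that the edge set $M = \{v\,\phi(v) : v \in V(G)-D\}$ is exactly $E(D, V(G)-D)$, which will prove it is a perfect matching. By construction $M \subseteq E(D, V(G)-D)$ and $|M| = n/2$. Conversely, suppose some $v \in V(G)-D$ has a second neighbour $u' \in D$ with $u' \neq \phi(v)$. Because $\phi$ is a bijection, $u' = \phi(v')$ for a unique $v' \in V(G)-D$, and the defining property of $\phi$ forces $N_G(u') \subseteq D \cup \{v'\}$; but $v \in N_G(u')$ then implies $v = v'$, hence $u' = \phi(v)$, a contradiction. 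So each $v \in V(G)-D$ has $\phi(v)$ as its unique neighbour in $D$, and symmetrically each $u \in D$ has $\phi^{-1}(u)$ as its unique neighbour in $V(G)-D$. Thus $E(D, V(G)-D) = M$ is a perfect matching, so $D$ is perfect.

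The crux of the argument is the uniqueness step in the forward direction: upgrading the bijectivity of $\phi$ to the absence of extra cross-edges between $D$ and $V(G)-D$. I expect no serious obstacle beyond carefully invoking the property $N_G(\phi(v)) \subseteq D \cup \{v\}$ on both sides of the prospective matching.
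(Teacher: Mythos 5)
Your proof is correct and rests on the same mechanism as the paper's: the external-private-neighbour map from $V(G)-D$ into $D$ becomes a bijection when $|D|=n/2$, which forces every vertex on each side of the cut to have exactly one neighbour on the other side. The paper packages this as a case analysis on vertices of $D$ (respectively $V(G)-D$) with too many or too few cross-neighbours, while you make the injection $\phi$ explicit, but the content is the same and your version is, if anything, the tidier of the two.
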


\begin{proof}   Let $D$ be a $\gamma_{sp}-$set of $G$. If $D$ is a perfect set, then $|D|=|V(G)-D|$ and $\gamma_{sp}(G)=\frac{n}{2}.$ \\

Conversely, suppose $\gamma_{sp}(G)=\frac{n}{2}.$ Let $D$ be a minimum super dominating set of $G.$ If there exists a vertex $z\in D$ such that $|N(z)\cap (V(G)-D)|>1,$ then  from  Definition \ref{SDS} we obtain $|D|>|V(G)-D|,$ a contradiction. If there exists a vertex $z\in D$ such that $|N(z)\cap (V(G)-D)|=0$, then, since $|D|=|V(G)-D|,$ there is a vertex $w\in D$ such that $w$ has more than one neighbour in $V(G)-D,$ a contradiction. Thus every vertex of $D$ has exactly one neighbour in $V(G)-D.$

Suppose there is a vertex $a\in V(G)-D$ such that $|N(a)\cap D|\geq 2.$ Then either $|D|>\frac{n}{2}$ or there exists a vertex from $D$ which has more than one neighbour in $V(G)-D,$ again a contradiction. Thus every vertex of $V(G)-D$ has exactly one neighbour in $D.$ By
Definition $\ref{SDPS},$  $D$ is a perfect set.
\end{proof}

The next lemma gives us a relation between the super domination number of a connected graph $G$ and its diameter. Recall that the diameter of a graph $G$, denoted by $diam (G)$,  is defined to be the maximum distance between any two vertices $x,y\in V(G)$.

\begin{lem}
If $G$ is a connected graph with $diam (G)\geq 3,$ then $\gamma_{sp}(G) \leq n-2.$
\end{lem}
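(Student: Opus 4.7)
The plan is to exhibit an explicit super dominating set of size $n-2$, namely $D=V(G)\setminus\{x,y\}$ where $x,y$ are two vertices realizing the diameter. By the observation in the preliminaries, it suffices to show that for each of the two ``outside'' vertices $x$ and $y$ we can find a neighbour in $D$ whose full neighbourhood is contained in $D$ together with that one outside vertex.

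Concretely, since $\mathrm{diam}(G)\geq 3$, I would pick $x,y\in V(G)$ with $d_G(x,y)\geq 3$, and let $w\in N_G(x)$ and $w'\in N_G(y)$ be arbitrary neighbours (which exist as $G$ is connected with $n\geq 4$). The key numerical fact is that $d_G(x,y)\geq 3$ forces $w\notin N_G(y)\cup\{y\}$ and $w'\notin N_G(x)\cup\{x\}$; in particular $w,w'\neq x,y$ so both lie in $D$, and moreover $w\neq w'$ (otherwise a common neighbour would give $d_G(x,y)\leq 2$). Then $N_G(w)\subseteq V(G)\setminus\{y\}=D\cup\{x\}$ because $w$ is not adjacent to $y$, and symmetrically $N_G(w')\subseteq D\cup\{y\}$. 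Applying the observation with $v=x$, $u=w$ handles $x$, and with $v=y$, $u=w'$ handles $y$, so $D$ is super dominating.

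Since $|D|=n-2$, this gives $\gamma_{sp}(G)\leq n-2$. There is no serious obstacle: the argument is essentially a one-line distance computation, and the only thing to be careful about is checking the observation's condition literally (i.e.\ that $w$'s neighbourhood avoids the \emph{other} excluded vertex $y$, not just $x$), which is exactly what the diameter hypothesis buys.
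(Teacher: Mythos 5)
Your proposal is correct and follows essentially the same route as the paper: both take $D=V(G)\setminus\{x,y\}$ for a pair $x,y$ at distance at least $3$ and conclude $\gamma_{sp}(G)\leq n-2$. The only difference is that the paper merely asserts this $D$ is a super dominating set, whereas you supply the (correct) verification via the observation, using that no neighbour of $x$ can be adjacent to $y$ when $d_G(x,y)\geq 3$.
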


\begin{proof}
Suppose $diam(G)=k$, $k\geq 3$, $d_G(x,y)=k$ and $\alpha =(x,w_1,\cdots, w_{k-1},y)$ is any $xy-path$ with minimum length. Then $V(G)-\{ x,y\} $ is a  super dominating set of $G$ and $\gamma_{sp}(G) \leq n-2.$

\end{proof}

\begin{cor}\label{Diam2}
By  Observation $\ref{TrivialB}$ and the above Lemma, for any connected graph $G,$ if $\gamma_{sp}(G) =n-1,$ then $diam(G)\leq 2.$

\end{cor}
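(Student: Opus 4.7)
The plan is to argue by contraposition against the preceding lemma, since the corollary is essentially its contrapositive with a sanity check that $n$ is large enough for the hypothesis $\gamma_{sp}(G)=n-1$ to be meaningful.

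First I would dispose of the trivial small cases via Observation \ref{TrivialB}. If $n=1$ then $\gamma_{sp}(G)=1=n$, not $n-1$, so we may assume $n\geq 2$; in this range $\gamma_{sp}(G)=n-1$ is the usual upper bound attained by complete graphs, stars, etc. (Observation \ref{TrivialB} also tells us $G\not\cong \overline{K_n}$, which is automatic from connectedness for $n\geq 2$, but it is the formal source cited in the corollary.) Since $G$ is connected, $\mathrm{diam}(G)$ is a well-defined positive integer.

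Next I would apply the contrapositive of the Lemma. Suppose, for contradiction, that $\mathrm{diam}(G)\geq 3$. Then the Lemma gives $\gamma_{sp}(G)\leq n-2$, which contradicts the hypothesis $\gamma_{sp}(G)=n-1$. Therefore $\mathrm{diam}(G)\leq 2$.

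There is no real obstacle here: the only thing to watch for is that one does not implicitly assume $n\geq 3$ or otherwise read the hypothesis vacuously, which is precisely why the corollary cites Observation \ref{TrivialB} in addition to the Lemma. The proof is thus a two-line argument combining these two facts.
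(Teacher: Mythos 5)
Your argument is correct and is exactly what the paper intends: the corollary is just the contrapositive of the preceding lemma, with Observation \ref{TrivialB} disposing of the degenerate cases. No further comment is needed.
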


In particular, if $\gamma_{sp}(G) =n-1$ and $G$ is a connected graph, then for any $x\in V(G)$, $x$ is a universal vertex or $x$ is a semi-universal vertex.\\

The converse of Corollary $\ref{Diam2}$ is not true.  Consider a graph $G$ with
$V(G)=\{x_1, x_2, ..., x_k, y_1, y_2,..., y_k, u\}$ and  $E(G)=\{ x_1y_1, x_2y_2,
..., x_ky_k\} \cup \{ux_i, uy_i\}$ for  $1\leq i\leq k$ and $k\geq 2$. Then $diam(G)=2$, but
$\{x_1, x_2, ....,x_k, u\}$ is a super dominating set of $G$, so $\gamma_{sp}(G)\leq k+1<2k.$\\

The next theorem describes the super domination number of  a cycle with $n$ vertices.

\begin{thm}
For a cycle $C_n$ is

$$\gamma_{sp}(C_{n})=\left\{ \begin{array}{cl}
\left\lceil \frac{n}{2}\right\rceil   & \textrm{if $n\equiv 0,3\ (mod 4)$} \\  
                                       &         \\
\left\lceil \frac{n+1}{2}\right\rceil  & \textrm{otherwise.}
\end{array} \right.$$

\end{thm}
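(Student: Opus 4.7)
The plan is to prove the formula by producing an explicit super dominating set that matches the claimed value and by combining Observation~\ref{LowerB} with Theorem~\ref{twr1} to upgrade the lower bound in the one residue class where $\lceil n/2\rceil$ is too weak.

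Label $V(C_n)=\{v_0,\dots,v_{n-1}\}$ with $v_iv_{i+1\,(\mathrm{mod}\,n)}$ an edge. For the upper bound the construction is to take consecutive pairs every four vertices and then handle a short tail according to $n\bmod 4$. Writing $n=4k+r$ with $r\in\{0,1,2,3\}$, I define
$$D\;=\;\{v_{4j},v_{4j+1}:0\le j\le k-1\}\;\cup\;T_r,$$
where $T_0=\emptyset$, $T_1=\{v_{4k}\}$, and $T_2=T_3=\{v_{4k},v_{4k+1}\}$. The cardinalities are $2k,\,2k+1,\,2k+2,\,2k+2$, which equal $\lceil n/2\rceil$ when $r\in\{0,3\}$ and $\lceil(n+1)/2\rceil$ when $r\in\{1,2\}$, matching the statement. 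Verification that $D$ is super dominating is routine via the observation: every $v\notin D$ has a cycle-neighbour $u\in D$ whose other cycle-neighbour also lies in $D$, because the $D$-vertices come in consecutive pairs, so $N_{C_n}(u)\subseteq D\cup\{v\}$.

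For the lower bound, Observation~\ref{LowerB} gives $\gamma_{sp}(C_n)\ge\lceil n/2\rceil$, which already equals the claim when $n\equiv 0,1,3\pmod 4$. The remaining case is $n=4k+2$, where I must improve the bound by one. Assume for contradiction that $\gamma_{sp}(C_{4k+2})=n/2$. By Theorem~\ref{twr1}, any minimum super dominating set $D$ is perfect, so $E(D,V(G)-D)$ is a perfect matching; since $C_n$ is $2$-regular, each vertex then has exactly one neighbour in the other class and hence exactly one in its own class. Thus both induced subgraphs $G[D]$ and $G[V(G)-D]$ are unions of isolated edges. Starting at any $v_0\in D$ with $D$-neighbour $v_1$, this forces $v_2,v_3\in V(G)-D$, then $v_4,v_5\in D$, etc., giving a period-$4$ pattern $DDCCDDCC\dots$ around the cycle. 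For the pattern to close consistently we would need $4\mid n$, contradicting $n\equiv 2\pmod 4$. Hence $\gamma_{sp}(C_{4k+2})\ge 2k+2=\lceil(n+1)/2\rceil$.

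The one genuine obstacle is the parity argument in the $n\equiv 2\pmod 4$ case; everything else is verification of the explicit construction and invocation of the already-proved lower bound.
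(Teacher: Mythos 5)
Your proof is correct, and its overall shape is the same as the paper's: consecutive pairs of vertices spaced four apart give the upper-bound construction, and Observation~\ref{LowerB} plus integrality settles the lower bound in the residue classes $0,1,3 \pmod 4$. Where you genuinely diverge --- and improve on the paper --- is the case $n\equiv 2\pmod 4$, the only class where $\lceil n/2\rceil$ is not tight. The paper dismisses this case with the bare assertion that, by the definition of a super dominating set, no such set with $2k+1$ elements exists, giving no argument. You instead assume $\gamma_{sp}(C_{4k+2})=n/2$, invoke Theorem~\ref{twr1} to conclude that a minimum super dominating set $D$ is perfect, use $2$-regularity to deduce that both $G[D]$ and $G[V(G)-D]$ are perfect matchings, and read off the forced period-$4$ pattern, which cannot close around a cycle of length $\equiv 2\pmod 4$. (Even more briefly: $G[D]$ being a perfect matching on $D$ forces $|D|=2k+1$ to be even, an immediate contradiction.) This supplies a rigorous justification for precisely the step the paper leaves unjustified. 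Two minor cosmetic points: in the case $r=1$ your tail $T_1=\{v_{4k}\}$ is a singleton rather than a pair, but since its second neighbour $v_0$ also lies in $D$ the verification you describe still applies verbatim; and it is worth stating explicitly that the lower bound $\lceil n/2\rceil$ already equals $\lceil (n+1)/2\rceil$ when $n$ is odd, which is why $n\equiv 1\pmod 4$ needs no extra work.
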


\begin{proof} Let $V(G)=\{ u_1,u_2,...,u_n\}$ be the set of vertices of $C_4$. We consider the following four cases.

\begin{enumerate}
    \item Let $n \equiv 0(mod 4)$. Then $n=4k$ for some positive integer $k.$ Consider the set $D=\{ u_{2},u_{3},u_{6},u_{7},\cdots,u_{4k-2},u_{4k-1}\}$. Clearly, $D$ is a super dominating set with $|D|=\frac{n}{2}$. By  Observation $\ref{LowerB}$, $D$ is minimum and $\gamma_{sp}(C_{n})=\left\lceil \frac{n}{2}\right\rceil .$

    \item Let $n \equiv 1 (mod 4)$. Then $n=4k+1$ for some positive integer $k.$ If $n=5,$ then it is easy to check that $\gamma_{sp}(C_5)=3=\frac{6}{2}=\frac{n+1}{2}.$ Let $n>5$ and consider $D=\{ u_{2},u_{3},u_{6},u_{7},\cdots,u_{4k-2},u_{4k-1}, u_{4k+1}\}$. Then $D$ is a  super dominating set with $|D|=\frac{n+1}{2}$. By  Observation  $\ref{LowerB}$, $D$ is minimum and $\gamma_{sp}(C_{n})=\left\lceil \frac{n+1}{2}\right\rceil $.

    \item Let $n \equiv 2 (mod 4)$. Then $n=4k+2$ for some positive integer $k.$ Consider $D=\{ u_{2},u_{3},u_{6},u_{7},\cdots,u_{4k-1},u_{4k+2}\}$ . Then $D$ is a  super dominating set with $|D|=2k+2$. By Def \ref{SDS}, it is not possible to have a super dominating set with $2k+1$ elements. Therefore, $D$ is minimum and $\gamma_{sp}(C_{n})=\left\lceil \frac{n+1}{2}\right\rceil $.

\item Let $n \equiv 3 (mod 4)$. Then $n=4k+3 $ for some positive integer $k.$ Consider $D=\{u_{2},u_{3},u_{6},u_{7},\cdots,u_{4k+2},u_{4k+3}\}$. Clearly, $D$
is a super dominating set with $|D|=\frac{n+1}{2}$. By Observation $\ref{LowerB}$, $D$ is minimum and $\gamma_{sp}(C_{n})=\left\lceil \frac{n}{2}\right\rceil .$
\end{enumerate}
\end{proof}

\begin{cor}
For a path $P_n$ with $n\geq 3$, $\gamma_{sp}(P_{n})=\left\lceil \frac{n}{2}\right\rceil.$
\end{cor}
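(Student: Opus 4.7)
The plan is to combine a routine lower bound with an explicit construction. The lower bound $\gamma_{sp}(P_n) \geq \lceil n/2 \rceil$ is immediate from Observation~\ref{LowerB}: any $\gamma_{sp}$-set $D$ satisfies $|D| \geq n - |D|$, and since $\gamma_{sp}(P_n)$ is an integer this rearranges to the claimed ceiling.

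For the matching upper bound I would write $V(P_n) = \{u_1, \ldots, u_n\}$ and reuse the pairs-at-positions $(4i-2, 4i-1)$ template from the cycle proof, patching the tail according to $n \bmod 4$. Specifically, let $D_0 = \{u_{4i-2}, u_{4i-1} : 1 \leq i \leq \lfloor n/4 \rfloor\}$ and define $D := D_0$ if $n \equiv 0 \pmod 4$, $D := D_0 \cup \{u_{n-1}, u_n\}$ if $n \equiv 3 \pmod 4$, and $D := D_0 \cup \{u_n\}$ if $n \equiv 1$ or $2 \pmod 4$. A direct count gives $|D| = \lceil n/2 \rceil$ in every case, so this will match the lower bound.

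It then remains to verify that $D$ is super dominating, for which I would appeal to the characterisation stated in the observation just after Definition~\ref{SDS}: each $v \in V \setminus D$ needs a neighbour $u \in D$ with $N_G(u) \subseteq D \cup \{v\}$. For a ``left-pair'' non-member $v = u_{4i-3}$ the witness is $u_{4i-2}$, and for a ``right-pair'' non-member $v = u_{4i}$ the witness is $u_{4i-1}$; in either case the witness has only one neighbour outside $D$, namely $v$ itself. Vertices near the right end of the path are handled by the adjoined tail: when $n \equiv 1$ or $2 \pmod 4$ the pendant $u_n$ automatically witnesses $u_{n-1}$ because $N_G(u_n) = \{u_{n-1}\}$, and when $n \equiv 3 \pmod 4$ the extra pair $\{u_{n-1}, u_n\}$ behaves as a shifted generic pair. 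I foresee no genuine obstacle beyond this bookkeeping; the one conceptual takeaway is that the one-vertex saving over the cycle for $n \equiv 1, 2 \pmod 4$ comes exactly from the pendant endpoints of $P_n$ being able to serve as their own external-private-neighbour witnesses, a role that on $C_n$ would require a second vertex.
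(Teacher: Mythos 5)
Your argument is correct and is essentially the approach the paper intends: the corollary is stated without its own proof immediately after the cycle theorem, and your sets $D$ are exactly the paper's cycle constructions adapted to the path, with the lower bound supplied by Observation~\ref{LowerB}. One small slip in your closing remark: for $n\equiv 1\pmod 4$ there is no saving over the cycle, since $\left\lceil \frac{n}{2}\right\rceil=\left\lceil \frac{n+1}{2}\right\rceil=\frac{n+1}{2}$ when $n$ is odd; the pendant-vertex saving you describe occurs only for $n\equiv 2\pmod 4$.
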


Now we present a lemma showing the relation between the super domination number of $G$ and the number of its edges.

\begin{lem}\label{t1} For any connected graph $G$ with $ n > 1$, $\gamma_{sp}(G) \leq 2q - n + 1$. If $\gamma_{sp}(G)=2q-n+1$, then $G$ is a tree.
\end{lem}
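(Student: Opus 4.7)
The plan is to reduce the claim to the already observed bound $\gamma_{sp}(G)\le n-1$ together with the standard inequality $q\ge n-1$ for connected graphs. Specifically, from Observation \ref{TrivialB} we know $\gamma_{sp}(G)=n$ precisely when $G=\overline{K_n}$; since $G$ is connected and $n>1$, this case is excluded, so $\gamma_{sp}(G)\le n-1$. On the other hand, connectedness gives $q\ge n-1$, which rearranges to $2q-n+1\ge n-1$. Chaining these two inequalities yields $\gamma_{sp}(G)\le n-1\le 2q-n+1$, which is the first part of the lemma.

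For the equality statement, the same chain is the key. If $\gamma_{sp}(G)=2q-n+1$, then in particular $2q-n+1\le n-1$, i.e.\ $q\le n-1$. Combined with $q\ge n-1$ from connectedness, this forces $q=n-1$, and a connected graph on $n$ vertices with $n-1$ edges is a tree.

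The only thing to be careful about is invoking Observation \ref{TrivialB} correctly: the observation tells us the \emph{exact} characterization of graphs with $\gamma_{sp}(G)=n$, and what we need here is only the contrapositive direction (not $\overline{K_n}$ implies strict inequality), which is immediate. No delicate combinatorial construction is required, and there is no genuine obstacle; the argument is essentially a one-line combination of two facts already in hand.
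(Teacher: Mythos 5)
Your argument is correct and is essentially the paper's own proof: both chain $\gamma_{sp}(G)\le n-1=2(n-1)-n+1\le 2q-n+1$ using connectedness, and both derive $q=n-1$ from equality to conclude $G$ is a tree. Your explicit appeal to Observation \ref{TrivialB} to justify $\gamma_{sp}(G)\le n-1$ just makes precise what the paper takes as already established.
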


\begin{proof} For any connected graph $G$, $\gamma_{sp}(G) \leq n-1 = 2(n-1)-n+1 \leq 2q-n+1$.  If $\gamma_{sp}(G)=2q-n+1$, then $q = n-1$ and $G$ is a tree.
\end{proof}

\begin{thm} For any graph $G$, $\gamma_{sp}(G) \geq n - \frac{1}{2}-\sqrt{\frac{2n^2-2n-4q+1}{4}}$ and the bound is sharp.
\end{thm}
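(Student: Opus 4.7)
The plan is to fix a minimum super dominating set $D$, set $k:=n-|D|=|V(G)-D|$ (so $\gamma_{sp}(G)=n-k$), and bound $q$ from above in terms of $n$ and $k$. Rearranging the resulting quadratic inequality in $k$ will give exactly the stated bound on $\gamma_{sp}(G)$.

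The key structural input comes from the definition of super domination. For each $v\in V(G)-D$ pick an external private neighbour $u_v\in D$ of $v$ with respect to $V(G)-D$. By Observation~1, $u_v$ has $v$ as its \emph{unique} neighbour in $V(G)-D$, so the map $v\mapsto u_v$ is injective: different $v$'s must receive different $u_v$'s, since any private neighbour has only one neighbour in $V(G)-D$. Let $A=\{u_v : v\in V(G)-D\}\subseteq D$; then $|A|=k$ and, by Observation~\ref{LowerB}, $k\le n/2$, so $|D\setminus A|=n-2k\ge 0$.

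Now I would partition the edges of $G$ into four groups and bound each:
\begin{enumerate}
\item edges inside $D$: at most $\binom{n-k}{2}$;
\item edges inside $V(G)-D$: at most $\binom{k}{2}$;
\item edges from $A$ to $V(G)-D$: exactly $k$, since each $u_v\in A$ contributes exactly one such edge;
\item edges from $D\setminus A$ to $V(G)-D$: at most $(n-2k)k$, with no further constraint.
\end{enumerate}
Summing these and simplifying gives $2q\le n^2-n-2k^2+2k$, that is, $k^2-k-\tfrac12(n^2-n-2q)\le 0$.

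Solving this quadratic inequality for $k$ yields
\[
k\ \le\ \frac{1+\sqrt{\,2n^2-2n-4q+1\,}}{2},
\]
and since $\gamma_{sp}(G)=n-k$, this rearranges to exactly the claimed inequality. There is no real obstacle beyond correctly identifying that the assignment $v\mapsto u_v$ is injective (which is what controls the edge count between $A$ and $V(G)-D$); the arithmetic is routine. For sharpness I would exhibit $K_n$: here $q=\binom{n}{2}$, $\gamma_{sp}(K_n)=n-1$, and a direct substitution gives $2n^2-2n-4q+1=1$, so the right-hand side equals $n-\tfrac12-\tfrac12=n-1$, matching $\gamma_{sp}(K_n)$.
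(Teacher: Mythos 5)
Your proof is correct and takes essentially the same approach as the paper: both rest on the injective assignment $v\mapsto u_v$ of private neighbours, each having a unique neighbour across the cut, and both reduce to the same inequality $q\le \binom{n}{2}-k(k-1)$ with $k=n-\gamma_{sp}(G)$ (you reach it by partitioning and bounding the edges, the paper by counting the forced non-edges, which is the complementary count). Your sharpness example $K_n$ is valid; the paper instead exhibits $C_4$, which also attains the bound.
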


\begin{proof} Let $D$ be a $\gamma_{sp}-$set of $G$.  Since $D$ is a super dominating set, for every $u \in V(G)-D$ there exists $v \in N(u) \cap D$ such that $N(v) \subseteq D \cup \{u\}$.  For every $u \in V(G)-D$, we can find an element $v \in D$ adjacent only to $u$.  Hence, $v$ is not adjacent to $n-\gamma_{sp}-1$ elements of $V(G)-D$.  Since there are $n-\gamma_{sp}$ elements in $V(G)-D$, we can find $n-\gamma_{sp}$ vertices in $D$ such that each of  $n-\gamma_{sp}$ vertices in $D$ is not adjacent to $n-\gamma_{sp}-1$ elements in $V(G)-D$. Therefore
\begin{center}
$q \leq \frac{n(n-1)}{2} - (n-\gamma_{sp}(G))(n-\gamma_{sp}(G)-1)$
\end{center}
and after some calculations we obtain
\begin{center}
 $\gamma_{sp}(G) \geq n - \frac{1}{2}-\sqrt{\frac{2n^2-2n-4q+1}{4}}$.
\end{center}
The bound is attained for the graph $C_{4}$.
\end{proof}

We finish this section with the following lemma that gives us a relation between the super domination number of a graph $G$ and its complement $\overline{G}.$ In $1956$ the original paper \cite{NG} by Nordhaus and Gaddum appeared. In it they gave
sharp bounds on the sum and product of the chromatic numbers of a graph and its
complement. Since then such results have been given for several parameters; see
for example \cite{NG2}. Here we have similar inequalities for the super domination number.

\begin{thm} For any graph $G,$ $n\leq \gamma_{sp}(G)+\gamma_{sp}(\overline{G})\leq 2n-1.$ The equality of the upper bound holds if and only if  $\{G,\overline{G}\}\cong K_n.$
\end{thm}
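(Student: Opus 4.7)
The proof plan hinges almost entirely on the two universal bounds already recorded, namely Observation~\ref{LowerB} ($\gamma_{sp}(G)\ge n/2$ for every graph) and Observation~\ref{TrivialB} ($\gamma_{sp}(G)\le n$ with equality iff $G\cong\overline{K_n}$). The idea is to apply each of these to both $G$ and $\overline{G}$ and then squeeze out the final step via the equality case of Observation~\ref{TrivialB}.

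For the lower bound $n\le\gamma_{sp}(G)+\gamma_{sp}(\overline{G})$ I would simply add the inequality in Observation~\ref{LowerB} applied to $G$ and the same inequality applied to $\overline{G}$ (which has the same vertex set of size $n$). This gives $\gamma_{sp}(G)+\gamma_{sp}(\overline{G})\ge n/2+n/2=n$, and no further work is needed.

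For the upper bound $\gamma_{sp}(G)+\gamma_{sp}(\overline{G})\le 2n-1$ I would argue by contradiction: the trivial bound already yields $\gamma_{sp}(G)+\gamma_{sp}(\overline{G})\le 2n$, and if the sum equalled $2n$ then by Observation~\ref{TrivialB} both $G$ and $\overline{G}$ would be isomorphic to $\overline{K_n}$, which is impossible for $n\ge 1$ except in the degenerate case $n=1$ (and the statement $2n-1$ is then trivially sharp). Hence the sum is strictly less than $2n$, i.e., at most $2n-1$.

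For the equality characterization, suppose $\gamma_{sp}(G)+\gamma_{sp}(\overline{G})=2n-1$. Since each summand is at most $n$, one of them must equal $n$; by Observation~\ref{TrivialB} the graph achieving this is $\overline{K_n}$, so the other is $K_n$, for which the introductory example gives $\gamma_{sp}(K_n)=n-1$. Conversely, if $\{G,\overline{G}\}$ contains a copy of $K_n$ (and thus also of $\overline{K_n}$), the sum is $n+(n-1)=2n-1$. The only step demanding any real care here is making sure the ``one of the summands must equal $n$'' deduction is spelled out cleanly, but there is no combinatorial obstacle — the whole statement really is a short corollary of the two preliminary observations.
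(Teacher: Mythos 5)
Your proposal is correct and follows essentially the same route as the paper: the lower bound by summing Observation~\ref{LowerB} for $G$ and $\overline{G}$, the upper bound and equality case by the fact that $G$ and $\overline{G}$ cannot both be $\overline{K_n}$ together with the equality clause of Observation~\ref{TrivialB}. The only cosmetic difference is that you phrase the upper bound as a contradiction argument rather than directly noting that at least one summand is at most $n-1$; both versions silently ignore the degenerate case $n=1$.
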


\begin{proof} The lower bound follows from Observation $\ref{LowerB}$.  Since $G$ and $\overline{G}$ can not be simultaneously  isomorphic to $\overline{K_n},$ by  Observation $\ref{TrivialB}$ we have the upper bound.

If $G\cong K_n,$ then $\overline{G}\cong \overline{K_n}$ and  $\gamma_{sp}(G)+\gamma_{sp}(\overline{G})= 2n-1.$ Assume now   $\gamma_{sp}(G)+\gamma_{sp}(\overline{G})= 2n-1.$ Then, without loss of generality, $\gamma_{sp}(G)=n$ and $\gamma_{sp}(\overline{G})=n-1.$ From the Observation $\ref{TrivialB}$, $G\cong \overline{K_n}$ and $\overline{G} \cong K_n .$

The lower bound is also attained, for example if $G\cong P_4.$
\end{proof}

\section{Super domination in trees}

In this section we are interested in giving a characterization of  trees which attain the lower and the upper bounds in terms of the number of vertices. The following Theorem gives us an upper bound for  trees. 

\begin{thm}\label{twr2} If $T$ is a tree with at least three vertices, then $\frac{n}{2}\leq\gamma_{sp}(T)\leq n-s,$ where $s$ is the number of support vertices in $T.$
\end{thm}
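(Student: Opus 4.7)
The lower bound $\gamma_{sp}(T)\geq n/2$ is immediate from Observation \ref{LowerB}, so the real content lies in the upper bound $\gamma_{sp}(T)\leq n-s$. The plan is to build an explicit super dominating set of the claimed size by removing one carefully chosen end-vertex from each support vertex of $T$.

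Concretely, I would enumerate the support vertices of $T$ as $u_1,\dots,u_s$, pick for each $u_i$ a single end-vertex neighbour $v_i\in N_T(u_i)\cap\Omega(T)$, and set $L=\{v_1,\dots,v_s\}$ and $D=V(T)\setminus L$. A brief preliminary step records that the $v_i$ are pairwise distinct (every leaf has a unique neighbour, hence is associated with a unique support vertex), so $|D|=n-s$. The heart of the argument is then to verify, via the characterization observation following Definition \ref{SDS}, that $D$ is super dominating. For any $v\in V(T)\setminus D$ one has $v=v_i$, and its unique neighbour $u_i$ lies in $D$: the assumption $n\geq 3$ forces every support vertex to have degree at least $2$, so $u_i$ is not itself an end-vertex and thus was never a candidate for $L$. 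It then remains to check $N_T(u_i)\subseteq D\cup\{v\}$, which holds because every neighbour of $u_i$ different from $v_i$ is either an internal vertex of $T$ or another end-vertex attached to $u_i$, and in either case was not placed in $L$. This exhibits $u_i$ as the required private neighbour for $v$.

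The main subtlety --- and really the only point to watch --- is precisely the inclusion $N_T(u_i)\subseteq D\cup\{v\}$: at a strong support vertex one is forced to keep all but one of its end-vertices inside $D$, since removing two of them would violate the private-neighbour condition. Apart from this choice of representatives, the argument is essentially bookkeeping, and the hypothesis $n\geq 3$ enters exactly once, to guarantee that each $u_i$ itself belongs to $D$.
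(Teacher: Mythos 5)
Your proposal is correct and follows essentially the same route as the paper: the paper also removes one end-vertex per support vertex (a set $S_1\subseteq\Omega(T)$ with each $v_i$ adjacent to a different support) and takes $V-S_1$ as the super dominating set. You merely spell out the verification that the paper leaves implicit, including the correct observations that the chosen leaves are pairwise distinct and that $n\geq 3$ keeps each support vertex out of the removed set.
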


\begin{proof}
The lower bound follows from Observation \ref{LowerB}. Let $Supp$ be the set of supports in $T,$ $|Supp|=s.$ Let $S_1\subseteq \Omega(T),$ $S_1=\{v_1,\ldots,v_{|Supp|}\}$ such that every $v_i, 1\leq i \leq |Supp|$ is adjacent to a different vertex from $Supp.$ Then $V-S_1$ is a super dominating set of $T$ and $\gamma_{sp}(T)\leq n-s.$
\end{proof}

Now we characterize the extremal trees of Theorem \ref{twr2}. We begin with the following observation.

\begin{obsn}\label{drzewa} Let $T$ be a tree such that $V(T)=2m$ and $\gamma_{sp}=m$ for some $m\geq 1.$ If there are two trees $T_1, T_2$ with $V(T_1)\cup V(T_2)=V(T), V(T_1)\cap V(T_2)=\emptyset$ and $E(T_1), E(T_2)\subseteq E(T)$ such that $\gamma_{sp}(T)=\gamma_{sp}(T_1)+\gamma_{sp}(T_2),$ then $\gamma_{sp}(T_1)=\frac{|V(T_1)|}{2}$ and $\gamma_{sp}(T_2)=\frac{|V(T_2)|}{2}.$
\end{obsn}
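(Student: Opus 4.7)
The plan is to obtain the conclusion from a direct squeeze between two applications of Observation~\ref{LowerB}, one for $T_1$ and one for $T_2$. No structural property of trees, private neighbours, or perfect matchings is actually needed for the argument itself; the observation is a purely numerical consequence of the additivity hypothesis together with the universal bound $\gamma_{sp}(G)\geq n/2$.

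First, since $T_1$ and $T_2$ are graphs, Observation~\ref{LowerB} yields $\gamma_{sp}(T_i)\geq |V(T_i)|/2$ for $i=1,2$. Summing these two inequalities and using the hypotheses $V(T_1)\cup V(T_2)=V(T)$, $V(T_1)\cap V(T_2)=\emptyset$, and $|V(T)|=2m$, I obtain
$$\gamma_{sp}(T_1)+\gamma_{sp}(T_2)\ \geq\ \frac{|V(T_1)|+|V(T_2)|}{2}\ =\ m.$$
By hypothesis, $\gamma_{sp}(T_1)+\gamma_{sp}(T_2)=\gamma_{sp}(T)=m$, so the displayed inequality is actually an equality. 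Since neither summand can lie strictly below its own lower bound, both component inequalities $\gamma_{sp}(T_i)\geq |V(T_i)|/2$ must be tight, which is exactly the two equalities claimed.

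I do not anticipate any real obstacle: the whole argument is one line of arithmetic, and the assumption that $T_1,T_2$ are trees (rather than arbitrary induced subgraphs of $T$) is not even exploited. The substance of the observation is that, under the tight constraint $\gamma_{sp}(T)=n/2$, any additive splitting transfers the tightness down to each piece. Combined with Theorem~\ref{twr1}, this forces each $T_i$ to admit a perfect super dominating set, which is presumably what makes the observation the right building block for the subsequent inductive characterization of extremal trees.
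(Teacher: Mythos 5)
Your proposal is correct and is essentially the paper's own argument: the paper proves the same squeeze by contradiction (assuming one summand exceeds its lower bound from Observation~\ref{LowerB} and deriving $\gamma_{sp}(T)>m$), while you phrase it directly as equality forcing both component inequalities to be tight. The two are the same one-line arithmetic argument.
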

\begin{proof} Without loss of generality, suppose $\gamma_{sp}(T_1)>\frac{|V(T_1)|}{2}.$ Then $\gamma_{sp}(T)=\gamma_{sp}(T_1)+\gamma_{sp}(T_2)>\frac{|V(T_1)|}{2}+\frac{|V(T_2)|}{2}>m.$
\end{proof}

Let $\mathcal{R}$ be the family of trees $T$ that can be obtained from a sequence $T_1,\ldots,T_j$ ($j\geq 1$) of trees such that:\\

1. The tree $T_1=P_2=(a_1,b_1).$\\

2. If $j\geq 2,$ define the tree $T_j$ such that $V(T_j)=V(T_{j-1})\cup \{a_j,b_j\}$ and $E(T_j)=E(T_{j-1})\cup \{a_jb_j\}\cup \{e\},$ where $e=a_ia_j$ or $e=b_ib_j$ for some $1\leq i\leq j-1.$\\

If $T\in \mathcal{R}$ with $|V(T)|=2m,$ denote by $A=\{a_1,\ldots, a_m\}$ and $B=\{b_1,\ldots,b_m\}.$

Observe that for all $1\leq i\leq m,$ $N_T(a_i)\cap B=\{b_i\}$ and $N_T(b_i)\cap A=\{a_i\}.$

It is clear that $A$ and $B$ are minimum super dominating sets of $T.$ Moreover, by construction of $T,$ $T$ does not have strong support vertices.

We  say that a vertex $v\in A$ has a status $a$ ($sta(v)=a$) and a vertex $v\in B$ has a status $b$ ($sta(v)=b$).

\begin{thm} Let $T$ be a tree with $|V(T)|=n\geq 2.$ Then $\gamma_{sp}(T)=\frac{n}{2}$ if and only if $T\in \mathcal{R}.$
\end{thm}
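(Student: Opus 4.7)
The plan is to prove the two directions separately. The reverse direction is essentially immediate from the construction of $\mathcal{R}$: using the observation in the excerpt that $N_T(a_i)\cap B=\{b_i\}$ and $N_T(b_i)\cap A=\{a_i\}$ for every $i$, each $b_i\in V(T)\setminus A$ has $a_i$ as an external private neighbour with respect to $V(T)\setminus A$, so $A$ is a super dominating set of size $m=n/2$. Combined with Observation~\ref{LowerB} this gives $\gamma_{sp}(T)=n/2$.

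For the forward direction I would induct on $n$. The base case $n=2$ is just $T=P_2=T_1\in\mathcal{R}$. For $n\geq 4$ (necessarily even), fix a minimum super dominating set $D$; by Theorem~\ref{twr1} the edges $E(D,V(T)\setminus D)$ form a perfect matching $M$. Two structural facts then need to be established. First, $T$ has no strong support: if a vertex $s$ had leaves $\ell_1,\ell_2$, each $\ell_i$ could only be matched with $s$ in $M$ (its unique neighbour), but $s$ has only one partner in $M$. Second, taking a longest path $v_0,v_1,\dots,v_k$ of $T$, the support $v_1$ of the endvertex $v_0$ has degree exactly $2$: any additional neighbour of $v_1$ would either be a leaf (forcing $v_1$ to be a strong support) or would extend the path beyond length $k$.

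Writing $\ell:=v_0$, $s:=v_1$, and $v_2$ for the unique $T$-neighbour of $s$ distinct from $\ell$, the heart of the argument is to show that $T':=T-\{\ell,s\}$ is a tree with $\gamma_{sp}(T')=(n-2)/2$. Connectedness is clear because $\deg_T(s)=2$. Since $\ell$'s only neighbour is $s$, the matching $M$ must contain the pair $\{\ell,s\}$, so exactly one of $\ell,s$ lies in $D$. Take $D':=D\setminus\{s\}$ if $s\in D$, and $D':=D\setminus\{\ell\}$ otherwise; in each case $|D'|=(n-2)/2$. The verification that $D'$ super-dominates $T'$ is routine: every outside vertex $v$ of $T'$ was already outside in $T$ with some external private neighbour $u\in D$, and the pair structure of $M$ forces $u\notin\{\ell,s\}$ (the only outside neighbour of $s$ in $T$ is $\ell$, and $\ell$'s only neighbour is $s$), so $u$ survives in $D'$, and its unique outside neighbour in $T$ coincides with its unique outside neighbour in $T'$. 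Observation~\ref{LowerB} then forces equality.

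By the inductive hypothesis $T'\in\mathcal{R}$ with some defining partition $V(T')=A'\cup B'$; to finish, look at where $v_2$ sits. If $v_2\in A'$, say $v_2=a_i$, set $a_m:=s$ and $b_m:=\ell$ and add edges $a_mb_m=\ell s$ and $a_ia_m=v_2s$; if $v_2\in B'$, swap the two roles and attach via $b_ib_m=v_2s$. Either choice exhibits $T$ as the next term $T_m$ in a valid defining sequence, proving $T\in\mathcal{R}$. The step I expect to demand the most care is the verification that deletion of the matched pair $\{\ell,s\}$ preserves all external-private-neighbour witnesses for the remaining outside vertices; the preliminary structural observations (no strong supports, existence of a degree-$2$ support) and the final matching of $v_2$'s status with the choice of attachment edge are either straightforward or already anticipated by the definition of $\mathcal{R}$.
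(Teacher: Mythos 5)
Your proof is correct, and while it shares the paper's overall skeleton (induction on $n$, a longest path, deletion of the end pair, reattachment according to the status of $v_2$), it reaches the inductive step by a genuinely different mechanism. The paper gets control of the configuration at the end of the longest path through a case analysis on which of $x_1,x_2,x_3$ lie in a minimum super dominating set $D$, ruling out the bad cases by applying Observation~\ref{drzewa} (if $T$ splits as $T_1\cup T_2$ with $\gamma_{sp}(T)=\gamma_{sp}(T_1)+\gamma_{sp}(T_2)$, each piece must attain its own lower bound $|V(T_i)|/2$) to the stars hanging off the path; the key equality $\gamma_{sp}(T-\{x_1,x_2\})=\gamma_{sp}(T)-1$ is then asserted rather than verified in detail. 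You instead invoke Theorem~\ref{twr1} once, so that $E(D,V(T)\setminus D)$ is a perfect matching, and then everything --- the absence of strong supports, the fact that the support $v_1$ on a longest path has degree two, the membership of $\ell s$ in the matching, and the survival of every external-private-neighbour witness after deleting $\{\ell,s\}$ --- drops out of the matching structure. What your route buys is a shorter, more uniform argument with no subcases and an explicit proof that the restricted set $D'$ still super-dominates $T'$, which is the step the paper leaves implicit; what the paper's route buys is independence from the perfect-set characterization and a second use of the splitting observation, at the cost of the subcase analysis ($1a$, $1b$, $2$, $3$) that your matching argument disposes of simultaneously. Both proofs conclude identically with the reattachment of the pair via rule~$2$ of the definition of $\mathcal{R}$.
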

\begin{proof} If $T\in \mathcal{R},$ then by definition of the family $\mathcal{R},$  $\gamma_{sp}(T)=\frac{n}{2}.$
Let $T$ be a tree with  $\gamma_{sp}(T)=\frac{n}{2};$ then $n=2m$ for $m\geq 1$. We show that $T\in \mathcal{R}$ by induction on $m.$  If $m=1,$ then $T=P_2=(a_1,b_1)\in \mathcal{R}.$ If $m=2$ and $\gamma_{sp}(T)=2,$ then $T=P_4=(a_1, b_1, b_2, a_2)\in \mathcal{R}.$ Assume that $|V(T)|=2m\geq 6$ and the Theorem holds for any $T'$ with $|V(T')|=2m'$ with $m'<m.$

Let $P=(x_1,\ldots,x_p)$ be a longest path of $T$ and let $D$ be a minimum super dominating set of $T.$ By definition of $D,$ $|\{x_1,x_2\}\cap D|\geq 1.$ We consider the following cases. \\

$Case$ $1.$ The vertices $x_1, x_2\in D.$ Let $T'=T-\{x_2x_3\}=T_1\cup T_2,$ where $x_2\in T_1,$ $x_3\in T_2.$ Since $P$ is a longest path of $T,$ $T_1$ is a star and $\gamma_{sp}(T_1)=|V(T_1)|-1.$ \\

$a)$ Suppose $x_3\in D.$

 Then $d_T(x_2)>2$ (in the other case $D-\{x_1\}$ is a super dominating set of $T,$ a contradiction).  We have  $\gamma_{sp}(T_1)=|V(T_1)|-1>\frac{|V(T_1)|}{2}.$ Since  $\gamma_{sp}(T)=\gamma_{sp}(T_1)+\gamma_{sp}(T_2),$ we have a contradiction with Observation \ref{drzewa}.

$b)$ Suppose $x_3\notin D.$

Observe that $\Omega(T)\cap N_T(x_2)\subseteq D.$ If $N_T(x_3)\cap
D\neq \{x_2\},$ then $D-\{x_2\}$ is a super dominating set of $T.$
 Therefore $x_2$ is
the only neighbour of $x_3$ belonging to $D$ and $x_4\notin D.$

Let $T'=T-\{x_3x_4\}=T_1\cup T_2,$ where $x_3\in T_1,$ $x_4\in T_2.$ Since $x_3, x_4\notin D,$ $\gamma_{sp}(T)=\gamma_{sp}(T_1)+\gamma_{sp}(T_2).$

If $T_1$ is a star, then we have the same contradiction as above.
Thus $T_1$ is not a star. Since $N_T(x_3)\cap D=\{x_2\},$ there
exists $v\in \Omega(T)$ such that $d_T(v,x_3)=2$ and $v\in D.$
Similarly to the case above,
$\gamma_{sp}(T_1)>\frac{|V(T_1)|}{2},$ a contradiction.

$Case$ $2.$ Suppose $x_2\notin D.$
 If $d_T(x_2)>2,$ then $N_T(x_2)\cap \Omega(T)\subseteq D.$ Let $T'=T-{x_2x_3}=T_1\cup T_2,$ where $x_2\in T_1, x_3\in T_2.$  Then $T_1$ is a star with three or more vertices and $\gamma_{sp}(T_1)=|V(T_1)|-1>\frac{|V(T_1)|}{2}.$ 

Therefore $d_T(x_2)=2.$ Let $T^{''}=T-\{x_1, x_2\}.$ We have $|V(T^{''})|=n-2=2(m-1)$ and $\gamma_{sp}(T^{''})=\gamma_{sp}(T)-1=m-1.$ By the induction hypothesis, $T^{''}\in \mathcal{R}.$ Then $T=T^{''}\cup \{x_1, x_2, x_1x_2, x_2x_3\}\in \mathcal{R}$ with $sta(x_2)=sta(x_3)$ and $sta(x_1)\neq sta(x_2).$

$Case$ $3.$ Suppose $x_1\notin D.$ Then $x_2\in D.$ If
$d_T(x_2)>2,$ then there exists $z\in N_T(x_2)\cap \Omega(T)$ such that $z\neq x_1$ and $z\in D.$ Let $T^{'''}=T-zx_2=T_1\cup T_2, T_1=\{z\}.$ Then we have $\gamma_{sp}(T)=\gamma_{sp}(T_1)+\gamma_{sp}(T_2)\geq 1+\frac{n(T_2)}{2}=\frac{n+1}{2}>\frac{n}{2},$ a contradiction.

 Therefore
$d_T(x_2)=2$ and the proof $T\in \mathcal{R}$ is the same as in
the above case.
\end{proof}

Now we are in position to characterize all trees for which the upper bound is attained.
Let  $\mathcal{S}$ be the family of trees $T$ that can be obtained from a sequence $T_1,\ldots,T_j$ ($j\geq 1$) of trees such that $T_1=P_3=(a_1,b_0,b_1)$ and if $j\geq 2,$ define the tree $T_j$ in the following three ways:\\

$1.$ $V(T_j)=V(T_{j-1})\cup \{a_l, b_k\}$ and $E(T_j)=E(T_{j-1})\cup \{a_lb_k\}\cup \{b_ib_k\}$ for some $b_i\in V(T_{j-1});$ or \\

$2.$ $V(T_j)=V(T_{j-1})\cup \{a_l\}$ and $E(T_j)=E(T_{j-1})\cup \{a_lb_i\}$ for some non-support vertex  $b_i\in V(T_{j-1});$ or \\

$3.$  $V(T_j)=V(T_{j-1})\cup \{ b_k\}$ and $E(T_j)=E(T_{j-1})\cup \{b_kb_i\}$ for some support vertex  $b_i\in V(T_{j-1}).$ \\

If $T\in \mathcal{S},$  denote by $A=\{a_1,\ldots, a_l\}$ and $B=\{b_1,\ldots,b_k\},$ where $l+k=n.$

Similarly like in the family $\mathcal{R},$ we will say that a vertex $v\in A$ has a status $a$ ($sta(v)=a$) and a vertex $v\in B$ has a status $b$ ($sta(v)=b$).

Observe that for all $1\leq i, |N_T(a_i)\cap B|=1$ and $|N_T(b_i)\cap A|\leq 1.$

By definition of the family $\mathcal{S},$ it is clear that the
set of vertices $B$ is a minimum super dominating set of $T$ and
$|A|=|Sp|,$ where $Sp$ is the set of supports in $T$. Therefore
$|B|=|V-A|=|V-Sp|=n-s.$ Moreover, if $v$ is a non-end vertex  of
$T,$ then $sta(v)=b.$

\begin{thm} If $T$ is a tree with at least three vertices and $s$ is the number of supports in $T,$ then $\gamma_{sp}(T)=n-s$ if and only if $T\in \mathcal{S}.$
\end{thm}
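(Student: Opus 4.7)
The plan is to prove each direction by induction: $(\Leftarrow)$ by induction on the construction-sequence length $j$, and $(\Rightarrow)$ by induction on $n$.

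For $(\Leftarrow)$, the base case $T_1=P_3$ gives $\gamma_{sp}(P_3)=2=3-1$ directly. In the inductive step I would track how $n-s$ changes across each operation: Op~1 contributes $\Delta n=2,\Delta s=1$; Op~2 contributes $\Delta n=1,\Delta s=1$ (since the non-support $b_i$ becomes a support of the newly added $a_l$); Op~3 contributes $\Delta n=1,\Delta s=0$. The upper bound $\gamma_{sp}(T_j)\le n_j-s_j$ is immediate from Theorem~\ref{twr2} (the set $B$ noted after the definition of $\mathcal{S}$ is super dominating of size $n_j-s_j$), so only the matching lower bound is needed. For this I would take a minimum super dominating set $D_j$ of $T_j$ and argue that the newly added vertices force a controlled contribution to $|D_j|$ (e.g.\ in Op~1 the leaf $a_l$ forces either $a_l\in D_j$ or $\{b_k,b_i\}\subseteq D_j$); after a local swap if necessary, the restriction $D_j\cap V(T_{j-1})$ becomes super dominating in $T_{j-1}$, and the inductive hypothesis gives $|D_j|\ge n_j-s_j$.

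For $(\Rightarrow)$, the base case $n=3$ gives $T=P_3\in\mathcal{S}$. For $n\ge 4$, I would identify a removable structure reversing one of the three operations. If $T$ has a strong support $w$ with two distinct end-vertex neighbours $u,u'$, set $T'=T-u'$: adjoining $u'$ to any super dominating set of $T'$ yields one for $T$ (since $u'\in D$ preserves the private-neighbour condition at $w$), giving $\gamma_{sp}(T)\le\gamma_{sp}(T')+1$, which combined with Theorem~\ref{twr2} forces $\gamma_{sp}(T')=n'-s'=n-1-s$; by the IH, $T'\in\mathcal{S}$, and $T$ is recovered by Op~3 at $w$. If $T$ has no strong support, take a longest path $(x_1,\ldots,x_p)$, so $x_2$ is a support with $x_1$ as its unique end-vertex; I then apply either reverse Op~2 (set $T'=T-x_1$, making $x_2$ a non-support of $T'$, then reconstruct via Op~2 at $x_2$) or reverse Op~1 (set $T'=T-\{x_1,x_2\}$ and reconstruct via Op~1 with $b_i=x_3$), choosing whichever is compatible with an $\mathcal{S}$-labeling of $T'$; in each subcase a super-dominating-set-transfer argument gives $\gamma_{sp}(T')=n'-s'$, and the IH applies.

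The main obstacle is the no-strong-support case of $(\Rightarrow)$. First, the lower-bound transfer $\gamma_{sp}(T')\ge n'-s'$ does not follow from the clean ``add back the removed vertex'' argument of the strong-support case; it requires a local swap around $x_2$ (or $x_3$) converting a minimum super dominating set of $T'$ into one for $T$ without enlargement. Second, one must choose correctly between reversing Op~1 and reversing Op~2 so that the resulting $T'\in\mathcal{S}$ admits an $\mathcal{S}$-labeling in which the attachment vertex has status~$b$, as required by the reconstruction; small cases such as $T=P_4$ (where reverse Op~1 would produce $P_2\notin\mathcal{S}$, so reverse Op~2 is forced) and $T=P_5$ (where reverse Op~2 would require $x_2\in A$ in the only $\mathcal{S}$-labeling of $P_4$, so reverse Op~1 is forced) illustrate that both reductions are genuinely needed and that the choice must be made based on the local structure at $x_2$ and $x_3$.
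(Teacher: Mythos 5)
Your overall plan for $(\Rightarrow)$ --- induction on $n$, peeling off a leaf structure that reverses one of the three operations --- is the same as the paper's, and your strong-support reduction is correct and in fact cleaner than the paper's corresponding case: removing one of two end-neighbours of $w$ leaves $s$ unchanged, the chain $n-s=\gamma_{sp}(T)\le\gamma_{sp}(T-u')+1\le (n-1-s)+1$ forces equality, and $w$ is a non-end support of $T'$, hence has status $b$, so operation $3$ reattaches $u'$. In the $(\Leftarrow)$ direction you are actually more careful than the paper, which simply asserts that $B$ is a \emph{minimum} super dominating set; your per-operation bookkeeping of $n-s$ is right, though the matching lower bound is only sketched.

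The genuine gap is exactly where you flag it: the no-strong-support case, where $d_T(x_2)=2$. ``Choosing whichever of reverse operation $1$ / reverse operation $2$ is compatible with an $\mathcal{S}$-labeling of $T'$'' is a description of a difficulty, not an argument: you never show that at least one of the two candidate subtrees satisfies $\gamma_{sp}(T')=n'-s'$, nor that it then admits a labeling giving the attachment vertex status $b$, nor do you supply the local swap needed for the transfer $\gamma_{sp}(T)\le\gamma_{sp}(T')+1$ (the obstruction being that $x_3$ may lie in a minimum super dominating set of $T'$ and serve as the unique private neighbour of some vertex, so that simply adjoining $x_1$ fails). The paper's missing ingredient is to let a fixed minimum super dominating set $D$ of $T$ drive the case split rather than the not-yet-known labeling of $T'$: it first rules out $\{x_1,x_2\}\subseteq D$ (then $D-\{x_2\}$ would still be super dominating), and in the two surviving cases ($x_1\in D,\ x_2\notin D$; or $x_1\notin D$, which forces $x_2,x_3\in D$) it \emph{always} deletes $\{x_1,x_2\}$, verifies $\gamma_{sp}(T')=\gamma_{sp}(T)-1$ and $s(T')=s-1$ directly from $D$, and reattaches via operation $1$ with $sta(x_1)=a$, $sta(x_2)=b$. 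In particular, reverse operation $2$ never occurs in the inductive step --- it is absorbed into the $n=4$ base cases $P_4$ and $K_{1,3}$ --- so the dichotomy you were trying to adjudicate does not actually arise; your own $P_5$ example already points to the two-vertex deletion as the uniform move. Until you either adopt this $D$-driven case analysis or prove a selection rule for your two reductions, the argument is incomplete.
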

\begin{proof} By definition of the family $\mathcal{S},$ if $T\in \mathcal{S},$ then $\gamma_{sp}(T)=n-s.$

Assume $T$ is a tree with $|V(T)|=n$ and $\gamma_{sp}(T)=n-s.$ We
show that $T\in \mathcal{S}$ by induction on $n.$

If $n=3,$ then $T=P_3=(a_1, b_0, b_1)$ and $T\in \mathcal{S}.$ If $n=4,$ then $T=P_4$ or $T=K_{1,3};$ in both cases $T\in \mathcal{S}.$

Assume $n\geq 4$ and the Theorem holds for every $m<n.$

Let $P=(v_1,\ldots,v_p)$ be a longest path in $T$ such that $d_T(v_2)$ is as big as possible. Let $D$ be a minimum super dominating set of $T.$ We consider the following cases.\\

$Case$ $1.$ Assume $d_T(v_2)>2.$ Let $T'=T-\{v_1\}.$ Then
$|V(T')|=n-1$ and $s(T')=s(T)=s.$ If $v_1\in D,$ then $D-\{v_1\}$
is a minimum  super dominating set of $T'$ and
$\gamma_{sp}(T')=\gamma_{sp}(T)-1=(n-1)-s.$ By the induction
hypothesis, $T'\in \mathcal{S}.$ Since $v_2$ is a non-end vertex
of $T',$ we have $sta(v_2)=b.$ Putting $sta(v_1)=b$ we have that
$T\in \mathcal{S}.$

Now, if $v_1\notin D,$ then $v_2\in D$ and since $d_T(v_2)>2,$ there exists a vertex $x\in N_T(x_2)\cap \Omega(T)$ such that $x\in D.$ Define $T'=T-\{x\}$ and similarly like in the above case we obtain $T\in \mathcal{S}.$\\

$Case$ $2.$ Suppose $d_T(v_2)=2.$ Here we have three cases.\\

$1.$ The set $\{v_1, v_2\}\subseteq D.$ Since $D$ is minimum, $v_3\notin D.$ If $v_3$ is a support vertex, then all of its end neighbours belong to $D$ and $D-\{v_2\}$ is a super dominating set of $T,$ a contradiction. If $v_3$ is not a support vertex, $d_T(v_3,\Omega(T))=2$ and also  $D-\{v_2\}$ is a super dominating set of $T,$ a contradiction.\\

$2.$ The vertex $v_1\in D$ and $v_2\notin D.$ Let $T'=T-\{v_1,v_2\}.$ We have $\gamma_{sp}(T')=\gamma_{sp}(T)-1,$ $|V(T')|=|V(T)|-2, s(T')=s-1.$ Then $\gamma_{sp}(T')=|V(T')|-s(T').$ From the induction hypothesis, $T'\in \mathcal{S}.$ We can obtain $T$ from $T',$ putting $sta(v_1)=a, sta(v_2)=b$ and thus $T\in \mathcal{S}.$\\

$3.$ The vertex $v_1\notin D$ and $v_2\in D.$ By definition of $D,$ $v_3\in D.$ Similarly to the previous case we can consider $T'=T-\{v_1, v_2\}$ and prove that $T\in \mathcal{S}.$
\end{proof}

\end{document}